\theoremstyle{plain}
\newtheorem{Thm}{Theorem}
\newtheorem{Lem}{Lemma}
\newtheorem{Cor}{Corollary}
\newtheorem{Prop}{Proposition}
\def\max{\operatorname{max}}
\def\exp{\operatorname{exp}}
\def\l{\ell}
\def\Z{\mathbb Z}
\def\R{\mathbb R}
\def\bee{\begin{enumerate}}
\def\ene{\end{enumerate}}
\def\bei{\begin{itemize}}\def\eni{\end{itemize}}
\def\beq{\begin{equation}}\def\enq{\end{equation}}
\def\beqs{\begin{equation*}}\def\enqs{\end{equation*}}
\def\C{\mathbb C}
\def\R{\mathbb R}
\def\Z{\mathbb Z}
\def\phi{\varphi}
\def\r{\mathbf r}\def\z{\mathbf z}\def\0{\mathbf 0}
\def\j{\mathbf j}\def\v{\mathbf v}
\def\max{\operatorname{max}}
\begin{document}

\title[Mahler measures of polynomials]{Mahler measures of polynomials that are sums\\ of a bounded number of monomials }
\author{ Edward Dobrowolski}
\address{Department of Mathematics and Statistics\\University of Northern British Columbia\\Prince George, BC\\Canada
}
\email{edward.dobrowolski@unbc.ca}
\author{ Chris Smyth}
\address{School of Mathematics and Maxwell Institute for Mathematical Sciences\\
University of Edinburgh\\
Edinburgh EH9 3FD\\
Scotland, U.K.}
\email{c.smyth@ed.ac.uk}
\subjclass[2010]{11R06}
\date{17 August 2016}
\keywords{polynomials, Mahler measure, height, closure}

\begin{abstract}
We study Laurent polynomials in any number of variables that are sums of at most $k$ monomials.
We first show that the Mahler measure of such a polynomial is at least $h/2^{k-2}$, where $h$ is the height of the polynomial. Next, restricting to such polynomials having integer coefficients, we show that the set of logarithmic Mahler measures of the elements of this restricted set is a closed subset of the nonnegative real line, with $0$ being an isolated point of the set.
 In the final section, we discuss the extent to which such an integer polynomial of Mahler measure $1$ is determined by its $k$ coefficients.
\end{abstract}

\maketitle

\section{Statement of results}

For a polynomial $f(z)\in\C[z]$, we denote by $m(f)$ its {\it logarithmic Mahler measure}
\beq\label{E-1}
m(f)=\int_0^1 \log|f(e^{2\pi it})|\,dt,
\enq
and write $M(f)=\exp(m(f))$ for the (classical) {\it Mahler measure} of $f$. Although first defined by D.H. Lehmer \cite{L}, its systematic study was initiated by Kurt Mahler \cite{Ma60,M,Ma62}.

 Let $h(f)$ denote the {\it height} of $f$ (the maximum modulus of its coefficients). Our first result relates these two quantities.
\begin{Thm}\label{T-1} For an integer $ k \geq 2,$  let
\beq\label{E-f}
f(x) =  a_1z^{n_1} + \cdots + a_{k-1}z^{n_{k-1}} + a_k\in\C[z] \text{ with } n_1>n_2>\cdots>n_{k-1}>0
\enq
 be a nonzero polynomial.  Then
\[
M(f)\geq \frac{h(f)}{2^{k-2}}.
\]
\end{Thm}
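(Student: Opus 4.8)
The natural approach is induction on $k$. The case $k=2$ is a direct computation: if $f(z)=a_1z^{n_1}+a_2$, then all $n_1$ zeros of $f$ have modulus $|a_2/a_1|^{1/n_1}$, so $M(f)=|a_1|\max(1,|a_2/a_1|)=\max(|a_1|,|a_2|)=h(f)$, and the inequality holds with equality.

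For the inductive step, write $h=h(f)=|a_j|$ and adopt the convention $n_k=0$, so that $f$ has exponents $n_1>n_2>\cdots>n_k=0$. Two subcases are immediate. If $j=1$, then $a_1$ is the leading coefficient of $f$, whence $M(f)\ge|a_1|=h$; if $j=k$, then $a_k$ is the trailing coefficient, and since the reciprocal polynomial $z^{n_1}f(1/z)$ has the same Mahler measure and has $a_k$ as leading coefficient, again $M(f)\ge h$. In both cases one even gets $M(f)\ge h$, so the whole content of the theorem lies in the case $1<j<k$. Here the plan is to isolate the part of $f$ that carries the large coefficient: write $f=z^{n_j}\widetilde g+q$, where $\widetilde g(z)=a_1z^{n_1-n_j}+\cdots+a_{j-1}z^{n_{j-1}-n_j}+a_j$ has $j\le k-1$ terms and constant term $a_j$, while $q(z)=a_{j+1}z^{n_{j+1}}+\cdots+a_k$ has $k-j\le k-2$ terms; note that the exponents occurring in $z^{n_j}\widetilde g$ all exceed those occurring in $q$. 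Since $a_j$ is the trailing coefficient of $\widetilde g$, we have $M(z^{n_j}\widetilde g)=M(\widetilde g)\ge|a_j|=h$, and the induction hypothesis also applies to $\widetilde g$ and to $q$. It therefore suffices to show that inserting $q$ cannot pull the Mahler measure below $h/2^{k-2}$, i.e. that $M(f)\ge h/2^{k-2}$ granted that $f$ is obtained from $z^{n_j}\widetilde g$ (of measure $\ge h$) by adding a polynomial with at most $k-2$ terms supported on strictly lower exponents.

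Proving this estimate is the main obstacle. The hypothesis that $q$ has few terms is essential: if $q$ may have many terms, the sum $z^{n_j}\widetilde g+q$ can be made to behave like a step function on the unit circle, and its Mahler measure is then no longer bounded below by any fixed fraction of $M(z^{n_j}\widetilde g)$. Moreover one cannot afford to reinsert the monomials of $q$ one at a time with a loss of a factor $2$ at each step: already for $f=(z-1)^3$, where $h=3$ and $M(f)=1$, adding a single monomial to a suitable three-term polynomial changes the Mahler measure by a factor larger than $2$, so the effect of $q$ must be controlled globally. I would attempt this by comparing $\log|f|$ with $\log|z^{n_j}\widetilde g|$ pointwise on $|z|=1$, exploiting the disjointness of the exponent ranges of $z^{n_j}\widetilde g$ and $q$ together with the fact that $q$, being a sum of at most $k-2$ monomials each of modulus $\le h$, is controlled in $L^2$ on the circle and hence cannot force $|f|$ to be small on more than a set of bounded measure; the exponent $k-2$ in $2^{k-2}$ should then come from absorbing these losses — one per monomial of $q$ below $z^{n_j}$ — pooled rather than applied sequentially. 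The example $f=(z-1)^2$, with $z^{n_j}\widetilde g=z^2-2z$, $q=1$, and $M(f)=1=\tfrac12 M(z^{n_j}\widetilde g)$, shows that a factor $2$ must indeed be conceded even when $q$ is a single monomial.
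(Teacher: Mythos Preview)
Your argument has a genuine gap: you reduce the problem to showing that if $f=z^{n_j}\widetilde g+q$ with $M(z^{n_j}\widetilde g)\ge h$ and $q$ a sum of at most $k-2$ lower-degree monomials, then $M(f)\ge h/2^{k-2}$ --- and then you do not prove this. You yourself observe, via $(z-1)^3$, that adding a \emph{single} monomial can drop the Mahler measure by a factor larger than $2$, so any naive term-by-term insertion fails; your suggested $L^2$/small-set heuristic is not made precise, and it is not clear how one would extract exactly the factor $2^{k-2}$ from it. In short, the decomposition $f=z^{n_j}\widetilde g+q$ does not give you a usable inductive mechanism.

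The paper's proof avoids this entirely by a different reduction of the number of terms: Mahler's inequality $M(f)\ge M\!\bigl(\tfrac{1}{n_1}f'\bigr)$. Differentiation kills the constant term, so $\tfrac{1}{n_1}f'$ has only $k-1$ nonzero terms, and its coefficient of index $i$ is $\tfrac{n_i}{n_1}a_i$; hence $h\!\bigl(\tfrac{1}{n_1}f'\bigr)\ge \tfrac{n_j}{n_1}h(f)$. Doing the same with the reciprocal $f^*(z)=z^{n_1}f(1/z)$ gives $h\!\bigl(\tfrac{1}{n_1}(f^*)'\bigr)\ge \tfrac{n_1-n_j}{n_1}h(f)$. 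Since $\max\{n_j/n_1,(n_1-n_j)/n_1\}\ge\tfrac12$ and $M(f)=M(f^*)$, the induction hypothesis applied to whichever of $\tfrac{1}{n_1}f'$, $\tfrac{1}{n_1}(f^*)'$ has the larger height yields $M(f)\ge \tfrac12\cdot h(f)/2^{k-2}$. The single clean loss of a factor $2$ per step is exactly what your approach was unable to guarantee.
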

The example $(z+1)^{k-1}$ shows that the constant $1/2^{k-2}$ in this inequality cannot be improved to any number  bigger than $1/{k-1 \choose {\lfloor(k-2)/2\rfloor}}$ (which $\sim \sqrt{2\pi k}/2^k$ as $k\to\infty$).

The inequality for the special case $(n_1,n_2\dots,n_{k-1})=(k-1,k-2,\dots,1)$ (i.e., a polynomial of degree $k-1$) follows from a result of Mahler \cite[equation (6)]{Ma62}.

In the other direction we have from \eqref{E-1} the trivial bound $M(f)\le kh(f)$.

\begin{Cor}\label{C-1} Given $k\ge 1$, there are only finitely many possible choices for integers $a_1,\dots,a_k$ such that $M(f)=1$ for some $f(x) =  a_1z^{n_1} +\dots + a_{k-1}z^{n_{k-1}} + a_k$ and any choice of distinct integer exponents $n_1,n_2,\dots,n_{k-1}.$
\end{Cor}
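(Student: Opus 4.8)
The plan is to deduce Corollary~\ref{C-1} directly from Theorem~\ref{T-1}. Suppose $f(z)=a_1z^{n_1}+\dots+a_{k-1}z^{n_{k-1}}+a_k$ has integer coefficients and $M(f)=1$. Applying Theorem~\ref{T-1} (after discarding any zero coefficients, so that $f$ is genuinely a sum of at most $k$ monomials) gives $h(f)\le 2^{k-2}M(f)=2^{k-2}$. Hence each $a_i$ is an integer of absolute value at most $2^{k-2}$, and there are only finitely many — at most $(2^{k-1}+1)^k$ — choices for the tuple $(a_1,\dots,a_k)$. That is exactly the assertion.

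A couple of small points need attention. First, I should address the possibility that some $a_i=0$: if $f$ has fewer than $k$ nonzero terms, then $f$ is a sum of at most $k-1\le k$ monomials, $h(f)$ is unchanged, and the bound $h(f)\le 2^{k-2}$ still holds a fortiori (indeed with a better constant); if $f$ is the zero polynomial then $M(f)=0\ne 1$, so this case does not arise. Second, I should note that the exponents $n_i$ play no role whatsoever in the bound of Theorem~\ref{T-1} — the estimate $M(f)\ge h(f)/2^{k-2}$ is uniform over all choices of distinct exponents — which is precisely why the conclusion can be stated uniformly in the $n_i$. One may also invoke invariance of the Mahler measure under $z\mapsto z^{-1}$ and multiplication by powers of $z$ to reduce an arbitrary sparse Laurent polynomial to the normalized shape in \eqref{E-f}, though for the statement as written this is not strictly necessary.

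There is no real obstacle here: the content is entirely in Theorem~\ref{T-1}, and the corollary is a one-line consequence. The only thing to be careful about is the bookkeeping around vanishing coefficients and the precise meaning of ``at most $k$ monomials'' versus ``exactly $k$ terms'' in the statement, so that the finiteness count is stated cleanly. I would therefore keep the proof to two or three sentences, simply citing Theorem~\ref{T-1} and remarking that the height bound confines each coefficient to the finite set $\{-2^{k-2},\dots,2^{k-2}\}\cap\Z$.
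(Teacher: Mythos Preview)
Your proof is correct and follows exactly the paper's own approach: apply Theorem~\ref{T-1} to get $h(f)\le 2^{k-2}$, whence there are at most $(2^{k-1}+1)^k$ choices for the coefficient tuple. The paper states this in a single sentence; your additional remarks on vanishing coefficients and exponent-independence are accurate but unnecessary for the argument.
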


This corollary  leaves open the question of whether, for fixed $a_1,\dots,a_k$, the number of choices for the exponents $n_i$ is finite or infinite. This is discussed in Section \ref{S-exp}.

Theorem \ref{T-1} in fact holds for Laurent polynomials in several variables, as the next result states. Since it follows quite easily from the one-variable case, we decided to relegate this general case to a corollary. Recall that the logarithmic Mahler measure in the general case is defined for $F=F(z_1,\dots,z_\ell)$ as
\begin{equation}\label{E-2}
m(F)=\int_0^1\cdots\int_0^1\log|F(e^{2\pi it_1},\dots,e^{2\pi it_\ell})|\,dt_1\cdots dt_\ell
\end{equation}
Again, $M(F):=\exp(m(F)).$

In \cite{BoSpec}, David Boyd studied the set $\mathcal L$ of Mahler measures of polynomials $F$ in any number of variables having integer coefficients. He conjectured that $\mathcal L$ is a closed subset of $\R.$ Our Theorem \ref{T-2} below is a result in the direction of this conjecture, but where we restrict the polynomials $F$ under consideration to be the sum of at most $k$ monomials.
In \cite[Theorem 3]{Snew}, the second author proved another restricted closure result of this kind, where the restriction was, instead, to integer polynomials $F$ of bounded length (sum of the moduli of its coefficients).

Boyd's conjecture is a far-reaching generalisation of a question of D.H. Lehmer \cite{L}, who asked whether there exists an absolute constant $C>1$ with the property that, for integer polynomials $f$ in one variable, either $M(f)=1$ or $M(f)\ge C.$

We now state our generalisation of  Theorem \ref{T-1}. In it, we write $\z_\l=(z_1,\ldots,z_\l)$.
\begin{Cor}\label{C-2} Let $F(\z_\l)\in\C[\z_\l]$ be a nonzero Laurent polynomial in $\l\ge 1$ variables that is the sum of $k$ monomials. Then
\[
M(F)\geq \frac{h(F)}{2^{k-2}}.
\]
\end{Cor}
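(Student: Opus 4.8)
The plan is to reduce the several-variable statement of Corollary \ref{C-2} to the one-variable case of Theorem \ref{T-1} by a substitution argument. Write $F(\z_\l)=\sum_{j=1}^k a_j \z_\l^{\mathbf{m}_j}$ where the $\mathbf{m}_j\in\Z^\l$ are distinct exponent vectors and $a_j\neq 0$, so that $h(F)=\max_j|a_j|$. The idea is to choose a vector $\mathbf{r}=(r_1,\dots,r_\l)\in\Z^\l$ such that the integers $\langle\mathbf{m}_j,\mathbf{r}\rangle$ are pairwise distinct for $j=1,\dots,k$; since the $\mathbf{m}_j$ are distinct, each of the finitely many pairwise differences $\mathbf{m}_i-\mathbf{m}_j$ ($i\neq j$) is a nonzero vector, and so the complement of the union of the finitely many hyperplanes $\{\mathbf{r}:\langle\mathbf{m}_i-\mathbf{m}_j,\mathbf{r}\rangle=0\}$ is nonempty and contains an integer point. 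Having fixed such an $\mathbf{r}$, define the one-variable Laurent polynomial $g(z)=F(z^{r_1},\dots,z^{r_\l})=\sum_{j=1}^k a_j z^{\langle\mathbf{m}_j,\mathbf{r}\rangle}$.

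The two facts I would then invoke are: first, that the substitution $z_i\mapsto z^{r_i}$ does not change the Mahler measure, i.e.\ $M(g)=M(F)$; and second, that $g$ is genuinely a sum of exactly $k$ monomials with distinct exponents, so $h(g)=h(F)$. For the first fact, the standard argument is that as $t$ ranges uniformly over $[0,1)$ the point $(e^{2\pi i r_1 t},\dots,e^{2\pi i r_\l t})$ moves along a closed one-parameter subgroup of the torus $\T^\l$; integrating $\log|F|$ against Haar measure pushed forward from the circle, together with Fubini/the unique ergodicity argument (or Boyd--Lawton type reasoning, though here it is exact because we are on a subtorus, not taking a limit), gives $m(g)=m(F)$. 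Actually, in this paper's setting the cleanest route is: $m(g)=\int_0^1\log|F(e^{2\pi i r_1 t},\dots,e^{2\pi i r_\l t})|\,dt$, and since the characters $t\mapsto e^{2\pi i\langle\mathbf{m}_j,\mathbf{r}\rangle t}$ have distinct frequencies, this is exactly the one-variable Mahler measure of $\sum_j a_j z^{\langle\mathbf{m}_j,\mathbf{r}\rangle}$; more carefully one reduces to the multiplicativity $m(\prod)=\sum m$ and invariance under monomial changes of variable, which is classical (Mahler). Finally, multiplying $g$ by a suitable power $z^N$ so that it becomes an honest polynomial with positive exponents and a nonzero constant term does not change $M$ or $h$, putting it in the exact form \eqref{E-f}, so Theorem \ref{T-1} yields $M(F)=M(g)\geq h(g)/2^{k-2}=h(F)/2^{k-2}$.

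The only real point requiring care — the ``main obstacle,'' such as it is — is the claim that the substitution preserves the Mahler measure. One must make sure the chosen $\mathbf{r}$ keeps the $k$ exponents distinct (handled above by avoiding finitely many hyperplanes), and one must cite or prove the invariance $m(F(z^{r_1},\dots,z^{r_\l}))=m(F)$; this is where I would either appeal to the well-known property of Mahler measure under monomial substitutions that preserve the lattice structure (e.g.\ as in Boyd's or Lawton's work) or give a two-line self-contained argument via the equidistribution of $\{r_i t \bmod 1\}$. Everything else — multiplicativity, invariance of height, clearing denominators to reach the normal form of Theorem \ref{T-1} — is routine.
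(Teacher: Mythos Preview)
Your reduction contains a genuine error at the one place you flagged as the ``main obstacle.'' For a \emph{fixed} integer vector $\mathbf r=(r_1,\dots,r_\l)$ it is simply not true that $m\bigl(F(z^{r_1},\dots,z^{r_\l})\bigr)=m(F)$. The closed one-parameter subgroup $\{(e^{2\pi i r_1 t},\dots,e^{2\pi i r_\l t}):t\in[0,1)\}$ is a proper subtorus of $\T^\l$, and integrating $\log|F|$ over it does \emph{not} recover the integral over the full torus; there is no Fubini or unique-ergodicity argument available here, and the ``equidistribution of $\{r_i t\bmod 1\}$'' you invoke fails precisely because the $r_i$ are integers (so the orbit is closed, not dense). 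The invariance of Mahler measure under monomial changes of variable is only valid for matrices in $\GL_\l(\Z)$, not for a $1\times\l$ row vector. A concrete counterexample: with $F(z_1,z_2)=1+z_1+z_2$ and $\mathbf r=(1,2)$ (which does separate the exponents) one gets $g(z)=1+z+z^2$ with $M(g)=1$, whereas $m(F)=L'(\chi_{-3},-1)>0$, so $M(F)>1$.

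The paper's fix is exactly the limiting version you dismissed. One takes $\r_n=(1,n,n^2,\dots,n^{\l-1})$ and uses the Boyd--Lawton theorem (packaged here as Proposition~\ref{P-1}) to get $m(F_{\r_n})\to m(F)$ as $n\to\infty$, while for all large $n$ the exponents stay distinct so $h(F_{\r_n})=h(F)$ and $F_{\r_n}$ has $k$ monomials. Theorem~\ref{T-1} then gives $M(F_{\r_n})\ge h(F)/2^{k-2}$ for each such $n$, and passing to the limit yields the corollary. Your argument becomes correct once you replace the false exact equality $M(g)=M(F)$ by this limit; everything else you wrote (choosing the vector to separate exponents, clearing to a genuine polynomial, invariance of $h$) is fine and is essentially what the paper does.
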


Corollary \ref{C-2} is an essential ingredient in our next result. For this, we fix $k\ge 1$ and consider the set ${\mathcal H}_k$ of Laurent polynomials $F(\z_\l)=F(z_1,\dots,z_\l)$ for all $\l\ge 1$ with integer coefficients that are the sum of at most $k$ monomials. So such an $F$ is of the form
\[
F(\z_\l)=\sum_{\j\in J} c(\j)\z_\l^{\j},
\]
 where $J\subset\Z^\l$ has $k$ column vector elements $\j$, with  $\z_{\l}^{\j}=z_1^{j_1}\cdots z_\l^{j_\l}$, where $\j=(j_1,\ldots,j_\l)^{\text{tr}}$,
and the $c(\j)$'s are integers, some of which could be $0$. The number of variables $\l$ defining $F$ is unspecified, and can be arbitrarily large.
We let $m({\mathcal H}_k)$ denote the set $\{m(F)\, :\, F\in {\mathcal H}_k\}.$

\begin{Thm}\label{T-2} The set $m({\mathcal H}_k)$ is a closed subset of $\mathbb R_{\ge 0}$.
Furthermore, $0$ is an isolated point of $m({\mathcal H}_k)$.
\end{Thm}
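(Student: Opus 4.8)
\emph{Common reduction.} I would begin by noting that $m(\mathcal{H}_k)\subseteq\R_{\ge 0}$ is classical (Jensen's formula in one variable, and the general case reduces to it). Now take any sequence $(F_n)$ in $\mathcal{H}_k$ with $m(F_n)\to L\in\R$. Since $m(F_n)$ is bounded above, Corollary~\ref{C-2} gives $\log h(F_n)\le m(F_n)+(k-2)\log 2$, so the heights $h(F_n)$ are bounded and only finitely many coefficient vectors occur; passing to a subsequence, all $F_n$ share a coefficient vector $\c=(c_1,\dots,c_k)\in\Z^k$. Writing $F_n(\z)=\sum_{i=1}^k c_i\z^{\j_i^{(n)}}$ and putting $G_{\c}(w_1,\dots,w_k)=\sum_{i=1}^k c_iw_i$, a \emph{fixed} linear form, we have $F_n=G_{\c}\circ\psi_n$, where $\psi_n\colon\z\mapsto(\z^{\j_1^{(n)}},\dots,\z^{\j_k^{(n)}})$ is a monomial map whose image is a rational subtorus $V_n\subseteq\T^k$; hence $m(F_n)=\int_{V_n}\log|G_{\c}|\,d\mu_n$, where $\mu_n$ is normalised Haar measure on $V_n$. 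Thus everything reduces to the behaviour of the integrals of the one fixed function $\log|G_{\c}|$ against Haar measures of subtori of $\T^k$.

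\emph{Closure.} Passing to a further subsequence, $\mu_n\to\mu$ in the weak-$*$ topology on probability measures on the compact group $\T^k$. The crucial point is that $\mu$ is again Haar measure on a rational subtorus: for each character $\chi\in\Z^k$ one has $\widehat{\mu_n}(\chi)=\int\chi\,d\mu_n\in\{0,1\}$, the value being $1$ exactly when $\chi$ annihilates $V_n$, so $\widehat\mu(\chi)=\lim_n\widehat{\mu_n}(\chi)\in\{0,1\}$, and a short argument (using that the annihilator of a subtorus is a saturated subgroup of $\Z^k$) shows that $A:=\{\chi:\widehat\mu(\chi)=1\}$ is a saturated subgroup, whence $\mu=\mu_W$ is Haar measure on the rational subtorus $W:=A^{\perp}\subseteq\T^k$. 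A uniform-integrability argument for $\log|G_{\c}|$ — whose only singularities are logarithmic, along $\{G_{\c}=0\}$, a set of $\mu_W$-measure zero — then gives $m(F_n)=\int_{V_n}\log|G_{\c}|\,d\mu_n\to\int_W\log|G_{\c}|\,d\mu_W=m(G_{\c}|_W)$, so $L=m(G_{\c}|_W)$. Finally, restricting coordinates to $W$ turns $G_{\c}$ into $\sum_i c_iu^{\chi_i}$ with $\chi_i\in\widehat W$, a Laurent polynomial with integer coefficients that is a sum of at most $k$ monomials; hence $L=m(G_{\c}|_W)\in m(\mathcal{H}_k)$, so $m(\mathcal{H}_k)$ is closed.

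\emph{The isolated point.} By Corollary~\ref{C-2}, $m(F)<\log 2$ forces $h(F)<2^{k-1}$. So if $0$ were not isolated in $m(\mathcal{H}_k)$ there would be $F_n\in\mathcal{H}_k$ with $0<m(F_n)\to 0$ and $h(F_n)$ bounded; running the reduction above produces a fixed linear form $G_{\c}$ and a subtorus $W$ with $m(G_{\c}|_W)=0$, i.e.\ a sum of at most $k$ monomials, with integer coefficients, of zero Mahler measure. The proof is then completed by a Lehmer-type lower bound for polynomials with a bounded number of terms — the statement that there is $\delta_k>0$ with $m(P)\in\{0\}\cup[\delta_k,\infty)$ for every integer Laurent polynomial $P$ that is a sum of at most $k$ monomials — which gives $\liminf_n m(F_n)\ge\delta_k$, a contradiction. (This last inequality is the one ingredient not supplied by the results above; one can instead argue by induction on $k$, the base cases $k\le 2$ being immediate because every height-one such polynomial has zero Mahler measure, but the inductive step then needs a separate analysis of restrictions of the linear form $G_{\c}$ to proper subtori, using that $G_{\c}$, having at least three effective terms, is essentially irreducible and not of the form $\pm\z^{\alpha}\prod_i\Phi_{d_i}(\z^{\j_i})$.)

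\emph{Main obstacle.} The delicate step is the passage to the limit in the closure proof. Identifying the weak-$*$ limit of the $\mu_n$ as Haar measure on a \emph{rational} subtorus is clean, via the $\{0,1\}$-valued Fourier transforms. Justifying $\int_{V_n}\log|G_{\c}|\to\int_W\log|G_{\c}|$ is not entirely routine, however: it requires a uniform bound on the $\mu_n$-measure of small neighbourhoods of $\{G_{\c}=0\}$, and the subtori $V_n$ may have dimension much smaller than $W$, so that the naive one-variable Remez-type estimate on a low-dimensional $V_n$ is not uniform. Controlling this concentration — using the algebraic structure of the linear form $G_{\c}$ together with the bounds from Corollary~\ref{C-2} — is the technical heart of the matter; for the isolated-point statement, the additional essential input is the Lehmer-type inequality for few-term polynomials.
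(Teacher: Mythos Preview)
Your initial reduction is exactly the paper's: Corollary~\ref{C-2} bounds the heights along any convergent sequence, so only finitely many coefficient vectors $\c$ occur, and every $F$ under consideration lies in $\mathcal P(G_{\c})$ for the linear form $G_{\c}=\sum_i c_iw_i$.

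From that point the paper and you diverge. The paper simply invokes \cite[Theorem~1]{Snew}, which asserts that each set $\mathcal M(G_{\c})$ is closed; the finite union over the relevant $\c$'s, intersected with $[0,B]$, is then closed, and closedness of $m(\mathcal H_k)$ follows. For the isolated point it invokes \cite[Theorem~2]{Snew} in the same fashion. So the paper's argument is essentially a two-line reduction to \cite{Snew}.

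Your proposal, by contrast, attempts to prove the content of \cite[Theorem~1]{Snew} directly, via weak-$*$ compactness of Haar measures on subtori. The identification of the limit measure is correct and rather clean: the $\{0,1\}$-valued Fourier transforms do force $\mu=\mu_W$ for a rational subtorus $W$, and (a point implicit in your argument) since each generator of the annihilator $A$ eventually lies in $\operatorname{Ann}(V_n)$, one even has $V_n\subseteq W$ for all large $n$, whence $G_{\c}|_W\not\equiv 0$. The step you flag as the ``main obstacle'' --- justifying $\int_{V_n}\log|G_{\c}|\to\int_W\log|G_{\c}|$ --- is precisely the substance that the paper outsources: it is a Lawton-type convergence theorem (here for subtori of arbitrary dimension rather than one-parameter ones), and it is genuinely nontrivial. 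So your gap is real, you have located it correctly, and it coincides with the cited black box; what your route buys is a transparent structural picture of why the limit is again in $m(\mathcal H_k)$, at the cost of having to redo that analytic work.

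For the isolated point, your appeal to a Dobrowolski-type lower bound for integer polynomials with at most $k$ nonzero terms is exactly the right external input; this is the same ingredient the paper cites (\cite{DLS,D1,D2}) when remarking that the isolation of $0$ has long been known, though the paper's formal proof routes it through \cite[Theorem~2]{Snew} instead. Your closing inductive sketch is unnecessary once that bound is granted.
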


In fact the isolation of $0$ in $m({\mathcal H}_k)$ has been essentially known for some time, indeed with explicit lower bounds for the size of the gap between $0$ and the rest of the set. The first such bound was given for one-variable polynomials by Dobrowolski, Lawton and Schinzel \cite{DLS}. This was improved by Dobrowolski in \cite{D1} and later improved further in \cite{D2}, where it was shown that for noncyclotomic $f\in\Z[z]$
\[
M(f)\ge  1+\frac1{\exp(a3^{\lfloor(k-2)/4\rfloor}k^2\log k)},
\]
where $a<0.785.$ Arguing as in the proof of Corollary \ref{C-2} below shows that the gap holds for polynomials in several variables too, and so applies to all $m(F)$ in $m({\mathcal H}_k)\!\setminus\!\{0\}.$

\section{Proof of Theorem \ref{T-1} and Corollary \ref{C-1}}\label{S-pf1}

We first prove the theorem by induction under the restriction that all $a_1,\dots,a_k$ are assumed to be nonzero. We employ two well-known facts:
\begin{enumerate}[(i)]
\item{ $M(f)=M(f^*)$ where $f^*(z)=z^{n} f(z^{-1}),$  with $n=\deg f.$ This immediately follows from \eqref{E-1}. }
\item{$M(f)\geq M(\frac{1}{n}f').$ This was proved by Mahler in \cite{M} .}
\end{enumerate}
 For the base case $k=2$ of our induction, we have \mbox{$M(f)=\max\{|a_1|,|a_2|\}= h(f),$ }
as required.

 Suppose now that the conclusion of the (restricted) theorem is true for some $k\geq 2,$ and suppose that $f$ has $k+1$ nonzero terms,  that is, $f(x) =  a_1z^{n_1} + \dots + a_{k}z^{n_{k}} + a_{k+1}.$ Then
$f^*(z) =  a_{k+1}z^{n_1} + a_kz^{n_1-n_k}+\dots + a_{1}.$  Because the $a_i$ are assumed nonzero, both $f$ and $f^*$ have degree $n_1.$
 Suppose that $h(f)=|a_i|$ for some $i,$ $(1\leq i\leq k+1).$ Then $h(\frac{1}{n_1}f')\geq \frac{n_i}{n_1}h(f),$ and $h(\frac{1}{n_1}(f^*)')\geq \frac{n_1-n_i}{n_1}h(f).$ Clearly $\max\{\frac{n_i}{n_1},\frac{n_1-n_i}{n_1}\}\geq \frac{1}{2},$ with $f'$  and $(f^*)'$ having $k$ terms each. Hence, by (i), (ii) and the induction hypothesis
\[
M(f)\ge\max\left\{M\left(\tfrac{1}{n_1}f'\right), M\left(\tfrac{1}{n_1}(f^*)'\right)\right\}\geq \tfrac{1}{2}\frac{h(f)}{2^{k-2}},
\]
which completes the inductive step, and the induction argument.

Now we can do the general case. If some of the $a_i$ can be $0$, then $f(z)$ is of the form $z^jf_1(z)$, where $j\ge 0$ and $f_1$ is of the form \eqref{E-f}, but with $k_1$ nonzero terms, where $0<k_1\le k.$ Then, using \eqref{E-1},
\[
M(f(z))=M(z^jf_1(z))=M(f_1(z))
\]
and, since $h(f_1)=h(f)$ we have
\[
M(f)=M(f_1)\ge \frac{h(f_1)}{2^{k_1-2}}\ge \frac{h(f)}{2^{k-2}}.
\]


\

Corollary \ref{C-1} now follows straight from the theorem, because any such $f$ must have height at most $2^{k-2}$, giving at most $(2^{k-1}+1)^k$ possible choices for $a_1,\dots,a_k$.

\section{ Proof of Corollary \ref{C-2}}\label{S-pf1.5}

For the Proof of Corollary \ref{C-2}, we need the following simple result.

\begin{Lem}\label{L-2} Let $\r_n=(1,n,n^2,\dots,n^{\l-1})\in\Z^\l$. Then for any finite set $V$  of nonzero vectors in $\R^\l$ there is an integer $N$ such that for each $n>N$ the vector $\r_n$ is not orthogonal to any vector $\v\in V$.
\end{Lem}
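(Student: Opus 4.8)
The plan is to prove that $\r_n = (1, n, n^2, \dots, n^{\l-1})$ avoids orthogonality to a given finite set $V$ of nonzero vectors, for all large $n$. The key observation is that for a fixed nonzero vector $\v = (v_0, v_1, \dots, v_{\l-1}) \in \R^\l$, the inner product $\langle \r_n, \v \rangle = v_0 + v_1 n + v_2 n^2 + \cdots + v_{\l-1} n^{\l-1}$ is the evaluation at $x = n$ of the polynomial $p_\v(x) = \sum_{i=0}^{\l-1} v_i x^i$. Since $\v \neq \bo$, this polynomial is not identically zero, hence has at most $\l - 1$ real roots. Therefore there exists $N_\v$ such that $p_\v(n) \neq 0$ for all integers $n > N_\v$; one may take $N_\v$ to be (the integer part of) the largest real root, or even crudely $N_\v = 1 + \sum_i |v_i| / |v_j|$ for any nonzero coefficient $v_j$, via a standard root bound.

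Then I would simply set $N = \max_{\v \in V} N_\v$, which is finite because $V$ is finite. For any $n > N$ and any $\v \in V$ we have $n > N_\v$, so $\langle \r_n, \v \rangle = p_\v(n) \neq 0$, i.e. $\r_n$ is not orthogonal to $\v$. This completes the argument.

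There is essentially no obstacle here: the statement is elementary once one recognizes the inner product as a polynomial evaluation. The only mild point to be careful about is ensuring $N$ can be taken uniform over $V$, which is immediate from finiteness of $V$, and noting that we need $n$ to range over integers larger than $N$ (the polynomial could have non-integer roots above any given integer, but since each $p_\v$ has finitely many roots, a finite threshold still works). I would present this in a few lines.

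\begin{proof}
Write $\v=(v_0,v_1,\dots,v_{\l-1})^{\text{tr}}$ for a vector in $V$. Since the entries of $\r_n$ are the powers $n^0,n^1,\dots,n^{\l-1}$, we have
\[
\langle \r_n,\v\rangle = \sum_{i=0}^{\l-1} v_i\, n^i = p_{\v}(n),
\]
where $p_{\v}(x)=\sum_{i=0}^{\l-1} v_i x^i\in\R[x]$. As $\v\ne\bo$, the polynomial $p_{\v}$ is not identically zero, so it has at most $\l-1$ real roots; hence there is an integer $N_{\v}$ such that $p_{\v}(n)\ne 0$ for every integer $n>N_{\v}$. Put $N=\max_{\v\in V}N_{\v}$, which is finite since $V$ is finite. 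Then for every integer $n>N$ and every $\v\in V$ we have $\langle\r_n,\v\rangle=p_{\v}(n)\ne 0$, so $\r_n$ is not orthogonal to any $\v\in V$.
\end{proof}
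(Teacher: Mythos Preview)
Your proof is correct and follows essentially the same approach as the paper: both recognize $\langle \r_n,\v\rangle$ as the value at $n$ of a nonzero real polynomial and then take $N=\max_{\v\in V}N_\v$. The only cosmetic difference is that the paper writes out an explicit Cauchy-type root bound (via the dominant leading term) rather than invoking the finite-roots fact abstractly, which you yourself note as an option in your preamble.
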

\begin{proof} Write $\v\in V$ in the form  $\v=(v_1,\dots,v_j,0,\dots,0)$ say, where $v_j\ne 0$ and $j\le \l$. If $j=1$ then $|\v\cdot \r_n|=|v_1|> 0$, so assume $j\ge 2.$ Then
\begin{align*}
|\v\cdot \r_n|=&\left|\sum_{i=1}^jv_i n^{i-1}\right|\\
\ge&|v_j|\left(n^{j-1}-n^{j-2}\left(\sum_{i=1}^{j-1}|v_i/v_j|\right)\right)\\
>\,& 0 \text{ for } n>\sum_{i=1}^{j-1}|v_i/v_j|,=N_\v \text{ say }.
\end{align*}
Now take $N=\max_{\v\in V} N_\v$.
\end{proof}

Following \cite{Sc}, given a fixed integer $s\ge 1$ and a polynomial $F$ in $s$ variables,  $\l\ge 0$ and an $\l\times s$ matrix $A=(a_{ij})\in\Z^{\l\times s}$, define the $s$-tuple $\z_\l^A$ by
\[
\z_\l^A= (z_1,\dots,z_\ell)^A=(z_1^{a_{11}}\!\cdots z_\ell^{a_{\ell 1}},\ldots,z_1^{a_{1s}}\!\cdots z_\ell^{a_{\ell s}})
\]
(which is $(1,1,\dots,1)\in\Z^s$ when $\l=0$) and $F_A(\z_\l)=F(\z_\l^A)$, a polynomial in $\ell$ variables $z_1,\ldots,z_\ell$. Then $m(F_A)$ is defined by \eqref{E-2} with $F$ replaced by $F_A$. Denote  by $\mathcal P(F)$ the set $\{F_A\, : \, A\in\Z^{\l\times s}, \l\ge 0\}$, and by $\mathcal M(F)$ the set $\{m(F_A)\, : \, F_A\in\mathcal P(F), F_A\ne 0 \}.$

In the case $\l=1$, and with $A$ replaced by $\r=(r_1,\dots,r_s)$, we have $z^{\r}=(z^{r_1},\dots,z^{r_s})$ and $F_{\r}(z)=F(z^{\r}).$

We also need the following.

\begin{Prop} \label{P-1}
Let $\l\ge 1$, $n\ge1,$ and $\r_n=(1,n,n^2,\dots,n^{\l-1})$, as in Lemma \ref{L-2}. Then for any Laurent polynomial $F(\z_\l)$ in $\l$ variables $\z_\l=(z_1,\dots,z_\l)$ we have $m(F_{\r_n}(z))\to m(F(\z_\l))$ as $n\to\infty.$ Furthermore, for $n$ sufficiently large,  $h(F_{\r_n})=h(F).$
\end{Prop}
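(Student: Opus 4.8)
The plan is to exploit the substitution $z_i \mapsto z^{n^{i-1}}$, which turns the $\ell$-dimensional torus integral defining $m(F(\z_\l))$ into a one-dimensional integral along a curve $t \mapsto (e^{2\pi i t}, e^{2\pi i n t}, \dots, e^{2\pi i n^{\l-1} t})$ on that torus. The key tool is the classical result (going back to Lawton, and used in this circle of ideas) that if one pushes forward the circle onto the torus via an integer vector $\r$, then the induced measure converges weakly to Haar measure on the torus as the coordinates of $\r$ become ``independent'' in an appropriate sense. Concretely, I would invoke the equidistribution statement: writing the Fourier expansion of $\log|F|$ — or, to sidestep the singularity of the logarithm, first approximating $\log|F|$ below and above by trigonometric polynomials in the spirit of the standard proof of Boyd–Lawton — the integral $\int_0^1 \log|F(e^{2\pi i r_1 t}, \dots, e^{2\pi i r_\l t})|\, dt$ picks out exactly the Fourier coefficients $\widehat{g}(\j)$ with $\j \cdot \r = 0$, i.e. $\j$ orthogonal to $\r$. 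Thus convergence $m(F_{\r_n}) \to m(F(\z_\l))$ follows once we know that for every nonzero $\j$ appearing (with the finitely many $\j$ that matter after the trigonometric-polynomial truncation), $\j \cdot \r_n \neq 0$ for all large $n$. But that is exactly Lemma \ref{L-2}, applied to the finite set $V$ of relevant frequency vectors: for $n > N$ the vector $\r_n = (1, n, n^2, \dots, n^{\l-1})$ is orthogonal to none of them, so only the $\j = \0$ term survives, giving the multivariable mean value $m(F(\z_\l))$.

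More carefully, I would run the Boyd–Lawton argument in the following steps. First, reduce to $F$ with no monomial factor (dividing out $\z_\l^\j$ changes neither $m$ nor $h$, and does not affect $\r_n$-substitution), so that $\log|F|$ is integrable and the relevant results apply. Second, fix $\p > 0$ and choose trigonometric polynomials $P_- \le \log|F| \le P_+$ on the torus with $\int (P_+ - P_-) < \p$; this is the standard device to control the mild singularities of $\log|F|$. Third, let $V$ be the (finite) set of all nonzero frequency vectors occurring in $P_-$ and $P_+$, apply Lemma \ref{L-2} to get $N$, and observe that for $n > N$ we have $\int_0^1 P_\pm(e^{2\pi i t}, \dots, e^{2\pi i n^{\l-1}t})\,dt = \widehat{P_\pm}(\0) = \int_{\T^\l} P_\pm$. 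Sandwiching, $|m(F_{\r_n}) - m(F(\z_\l))| < \p$ for $n > N$; letting $\p \to 0$ gives the limit. (One subtlety: one must also check $F_{\r_n} \neq 0$ for large $n$, i.e. the substitution does not collapse distinct monomials of $F$; this is again Lemma \ref{L-2} applied to the differences of exponent vectors of $F$, which is precisely what the ``no monomial factor'' normalization together with distinctness guarantees.)

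For the second assertion, $h(F_{\r_n}) = h(F)$ for $n$ large: writing $F(\z_\l) = \sum_{\j \in J} c(\j)\z_\l^\j$ with the $\j \in J$ distinct and $|J| = k$, the substitution sends $\z_\l^\j$ to $z^{\j \cdot \r_n}$. The coefficients of $F_{\r_n}$ can only differ from the $c(\j)$ through cancellation or merging, which happens exactly when $\j \cdot \r_n = \j' \cdot \r_n$ for some $\j \neq \j'$ in $J$, i.e. when $(\j - \j') \cdot \r_n = 0$. Apply Lemma \ref{L-2} to the finite set $V = \{\j - \j' : \j, \j' \in J,\ \j \neq \j'\}$ of nonzero vectors: for $n$ larger than the resulting $N$, all the scalars $\j \cdot \r_n$ are distinct, so $F_{\r_n}(z) = \sum_{\j \in J} c(\j) z^{\j \cdot \r_n}$ has exactly the same multiset of coefficients as $F$, hence the same height.

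The main obstacle is the first part, and specifically handling the singularity of $\log|F|$ on the torus: the naive Fourier-coefficient computation is only formal because $\log|F| \notin L^2$ in general and its Fourier series need not converge pointwise. The trigonometric-polynomial sandwich $P_- \le \log|F| \le P_+$ is the standard way around this; setting it up rigorously (existence of such $P_\pm$ with small gap, which is where the real analytic content of Boyd–Lawton lives) is the only genuinely nontrivial ingredient, and I would either cite it from the literature (e.g. via \cite{Sc} or Lawton's theorem) or reproduce it briefly.
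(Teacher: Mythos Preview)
Your approach is essentially the paper's: for the convergence $m(F_{\r_n})\to m(F)$ the paper simply cites Boyd \cite{Bo1} and Lawton \cite{La} (and \cite[Lemma 13, Proposition 14]{Snew}) rather than reproving anything, and for $h(F_{\r_n})=h(F)$ the paper applies Lemma \ref{L-2} to the set $V=\{\j-\j':\j,\j'\in J,\ \j\ne\j'\}$ exactly as you do.

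One caveat on your sketch of the first part: the sandwich $P_-\le\log|F|\le P_+$ with trigonometric polynomials $P_\pm$ cannot exist as stated when $F$ has zeros on the torus, since $\log|F|$ is then unbounded below while any trigonometric polynomial is bounded. Lawton's actual argument is more delicate (roughly, one truncates $\log|F|$ from below and controls the error uniformly in $\r$ via an estimate on the measure of the set where $|F_\r|$ is small). Since both you and the paper ultimately cite this result rather than reprove it, this does not affect the validity of the proof, but your description of ``the standard device'' is not accurate and would fail if you tried to reproduce it as written.
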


\begin{proof} The first part follows from results of Boyd \cite[p. 118]{Bo1} and Lawton \cite{La}; see also  \cite[Lemma 13 and Proposition 14]{Snew}.
Next, note that  $F$ is the sum of $k$ monomials of the form $c(\j)\z_\l^\j$, so that $F_\r$ is the sum of $k$ monomials of the form $c(\j)(z^\r)^\j = c(\j)z^{\r\j}= a_i z^{t_i}$ say, for some $i$, where $\j\in J$ is a column vector.
 We now take $\r=\r_n$, and apply Lemma \ref{L-2} to the set $V$ of all nonzero differences $\j-\j'$ between elements of $J$. The lemma then guarantees that, for $n$ sufficiently large, the $t_i$ are distinct, so that $F_\r$ and $F$ have the same coefficients. In particular, $h(F_\r)=h(F).$
\end{proof}

\begin{proof}[Proof of Corollary \ref{C-2}] This now follows from Theorem \ref{T-1}, using the fact, from Proposition \ref{P-1}, that, for any $\varepsilon>0$, $F$ has the same height and the same number of monomials as some one-variable polynomial $F_\r$ with $|m(F_\r)-m(F)|<\varepsilon$.
\end{proof}

\section{Proof of Theorem \ref{T-2}}\label{S-pf2}

\begin{proof}[Proof of Theorem \ref{T-2}]

 Throughout, $k\ge 2$ is fixed, while $\l\ge 0$ can vary. Take any $F\in{\mathcal H}_k,$ with $F(\z_\l)=\sum_{\j\in J} c(\j)\z_\l^{\j},$ say, where $J$ is a $k$-element subset of $\Z^\l$. Then $F\in\mathcal P(a_1z_1+\cdots+a_kz_k)$ for some integers $a_i$, where $\{c(\j)\}_{\j\in J}=\{a_i\}_{i=1,\dots,k}$ as multisets. (Again, some $a_i$'s could be $0$.)  Conversely, every element of $\mathcal P(a_1z_1+\cdots+a_kz_k)$ is a sum of $k$ monomials. (Note that because monomial terms may combine to form a single monomial term, or indeed vanish, the resulting $a_i$'s for some polynomials in $\mathcal P(a_1z_1+\cdots+a_kz_k)$
may be different from the $a_i$'s that we started with. Because of this, the height of some such polynomials  may be larger or smaller than the height $\max_{i=1}^k |a_i|$ of $a_1z_1+\cdots+a_kz_k$. This does not matter, however!)

Next, take some bound $B>0$ and consider all $F$ such that $m(F)\le B$. Then, by Corollary \ref{C-2},
\[
 h(F)\le 2^{k-2}e^{B},
\]
so that there are only finitely many choices for the integers $a_i$. So $m(F)$ belongs to the union -- call it $U_B$ -- of finitely many sets $\mathcal M(a_1z_1+\cdots+a_kz_k):=m(\mathcal P(a_1z_1+\cdots+a_kz_k)),$ intersected with the interval $[0,B]$. Thus $U_B$ is closed since, by \cite[Theorem 1]{Snew}, each set $\mathcal M(a_1z_1+\cdots+a_kz_k)$ is closed. Note that the finite number of sets comprising $U_B$ depends on $k$ and on $B$, but not on $F$. Finally, we see that $m({\mathcal H}_k)$ is closed. This is because any convergent sequence in $m({\mathcal H}_k)$, being bounded,  belongs, with its limit point, to $U_B$ for some $B$.

Finally, to show that $0$ is an isolated point of $m({\mathcal H}_k)$, note that, by \cite[Theorem 2]{Snew} it is an isolated point of every $\mathcal M(a_1z_1+\cdots+a_kz_k)$ that contains $0$. Hence, since $0\in U_B$ for every $B>0$, it is an isolated point of $U_B$ and therefore also of $m({\mathcal H}_k)$.

\end{proof}

\section{Products of cyclotomic polynomials that have the same coefficients}\label{S-exp}

In this section we address the question of whether two or more integer polynomials having Mahler measure $1$ (and so being products of cyclotomic polynomials $\Phi_n(z)$) can have the same set of $k$ nonzero coefficients. We restrict our attention to the case where all the coefficients $a_i$ are $1$. This already indicates what can happen.

Let $k\geq 2$ and define
\[
S= \{(n_1,\dots,n_{k-1})\in \Z^{k-1} \mid n_1 > n_2 > \dots > n_{k-1} > 0 \text{ with } \gcd(n_1,\dots,n_{k-1})=1\}.
\]

\begin{Prop}\label{P-exp} For $\mathbf{n}\in S$ define $f_{\mathbf{n}}(x) =  z^{n_1} + \dots + z^{n_{k-1}} + 1.$  Let
\[
S_{\text c}=\{\mathbf{n}\in S \mid M(f_{\mathbf{n}})=1\}.
\]
The set $S_{\text c}$ is finite if and only if $k$ is a prime number.
\end{Prop}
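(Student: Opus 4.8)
## Proof plan for Proposition \ref{P-exp}

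The plan is to analyze when $f_{\mathbf n}(z) = z^{n_1}+\cdots+z^{n_{k-1}}+1$ can be a product of cyclotomic polynomials, exploiting the special form of $f_{\mathbf n}$: it is a $0/1$ polynomial with exactly $k$ terms, all coefficients equal to $1$. The key structural observation is that a polynomial with nonnegative coefficients that is a product of cyclotomic polynomials, when evaluated at $z=1$, equals the number of its nonzero coefficients (here $k$); more usefully, if $f_{\mathbf n}=\prod \Phi_{d}(z)^{e_d}$ then comparing constant terms, leading terms, and $f_{\mathbf n}(1)=k$ forces rigid arithmetic constraints. I would first treat the ``if'' direction (the harder, constructive-vs-obstruction split depends on $k$) and then the ``only if'' direction.

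First, suppose $k$ is \emph{not} prime, say $k=ab$ with $1<a,b$. Then I would exhibit an infinite family in $S_{\text c}$. The natural candidate is built from the identity
\[
1+z+z^2+\cdots+z^{k-1} \;=\; \frac{z^k-1}{z-1},
\]
which has Mahler measure $1$ but \emph{consecutive} exponents, so it is a single point, not a family. To get infinitely many, use that $k=ab$ lets us write
\[
f(z)=\frac{z^{aN}-1}{z^N-1}\cdot\frac{z^{bM}-1}{z^M-1}
\]
or more simply $\bigl(1+z^{N}+z^{2N}+\cdots+z^{(a-1)N}\bigr)\bigl(1+z+\cdots+z^{b-1}\bigr)$ with $N\ge b$: this is a product of cyclotomic polynomials, has exactly $ab=k$ nonzero coefficients (the exponents $jN+i$ for $0\le j\le a-1$, $0\le i\le b-1$ are distinct once $N\ge b$), and all coefficients are $1$. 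Varying $N$ over all sufficiently large integers coprime to the exponent-gcd gives infinitely many distinct $\mathbf n\in S_{\text c}$, after normalizing by the gcd condition and discarding the constant-term issue (shift so the lowest exponent is $0$). I must check the gcd-$=1$ normalization is compatible with producing infinitely many genuinely distinct tuples, which it is since e.g. $N$ and $N+1$ cannot both share a common factor pattern.

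For the ``only if'' direction, suppose $k$ is prime and $\mathbf n\in S_{\text c}$, so $f_{\mathbf n}$ is a product of cyclotomic polynomials. The main work: show there are only finitely many such $\mathbf n$. I would argue that $f_{\mathbf n}(z)$, being a $0/1$ polynomial with $k$ terms and Mahler measure $1$, must be a product $\prod \Phi_{d_i}(z^{m_i})$-type expression, and use the theory of \emph{vanishing sums of roots of unity} / the structure of $\{0,1\}$-polynomials dividing $z^n-1$. Specifically, a polynomial with coefficients in $\{0,1\}$ all of whose roots are roots of unity is known to be a product of polynomials of the form $1+z^{a}+z^{2a}+\cdots+z^{(p-1)a}$ for primes $p$ (this is essentially a result going back to work on "cyclotomic" $\{0,1\}$-polynomials); the number of terms is then the product of the primes $p$ appearing. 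If $k$ is prime, the only factorization is a single block $1+z^a+\cdots+z^{(k-1)a}$, and the gcd-$=1$ condition forces $a=1$, i.e. $\mathbf n=(k-1,k-2,\dots,1)$ — a single element, so $S_{\text c}$ is finite (indeed a singleton, or empty for small $k$ after checking).

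The hard part will be establishing the structural classification of $\{0,1\}$-polynomials with all roots roots of unity — namely that they are exactly products of "$1+z^a+\cdots+z^{(p-1)a}$" blocks with the number of nonzero coefficients multiplying accordingly. This is where I would need to either cite a known result on cyclotomic $\{0,1\}$-polynomials or prove it via induction on degree: if $f$ is such a polynomial, pick the smallest prime $p$ with $\Phi_p \mid f$ after a suitable substitution $z\mapsto z^{1/g}$ (where $g=\gcd$ of exponents), factor out $1+z+\cdots+z^{p-1}$, verify the quotient is again a $\{0,1\}$-polynomial (this uses that the coefficients of $f$ are bounded by $1$ together with how multiplication by $1+z+\cdots+z^{p-1}$ acts on coefficient vectors), and recurse; the number-of-terms statement then follows by multiplicativity. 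Getting the quotient to stay $\{0,1\}$ is the delicate combinatorial point, and the primality of $k$ enters exactly because a prime cannot be the product of two integers each $>1$, ruling out any nontrivial such factorization and collapsing the family to one point.
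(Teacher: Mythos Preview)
Your composite-$k$ construction is essentially the same as the paper's and is fine (note that since the exponent $1$ always appears, the $\gcd=1$ condition is automatic for every $N\ge b$, so the normalization worry is empty).

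The prime case, however, has a genuine gap: the structural classification you rely on is \emph{false}. You assert that every $\{0,1\}$-polynomial whose roots are all roots of unity is a product of blocks $1+z^a+\cdots+z^{(p-1)a}$, and that dividing out such a block keeps the quotient $\{0,1\}$. A counterexample is
\[
\Phi_5(z)\Phi_6(z)=(1+z+z^2+z^3+z^4)(1-z+z^2)=1+z^2+z^3+z^4+z^6,
\]
which is a $\{0,1\}$-polynomial with exactly $5$ terms and Mahler measure $1$, yet is not of the form $1+z^a+z^{2a}+z^{3a}+z^{4a}$ for any $a$; and its quotient by the block $1+z+z^2+z^3+z^4$ is $\Phi_6=1-z+z^2$, which is not $\{0,1\}$. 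In particular $S_{\text c}$ is \emph{not} the singleton $\{(k-1,\dots,1)\}$ for $k=5$ (the paper explicitly remarks this immediately after stating the proposition), so your argument overshoots: it would prove $|S_{\text c}|\le 1$, which is false.

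The paper's route is genuinely different. For $k=p$ prime it first uses $f_{\mathbf n}(1)=p$ together with the values $\Phi_m(1)$ to force some $\Phi_{p^n}\mid f_{\mathbf n}$, then applies Mann's theorem on vanishing sums of roots of unity (no proper subsum of $f_{\mathbf n}$ can be divisible by $\Phi_{p^n}$ since its value at $1$ is $<p$) to conclude $n=1$, i.e.\ $\Phi_p\mid f_{\mathbf n}$. Finiteness then comes from a structural theorem of Dobrowolski on polynomials with few monomials and large cyclotomic part, which bounds $\deg f_{\mathbf n}$ in terms of $k$ once one rules out the alternative in that theorem (again using $\Phi_p\mid f_2$ and $f_2(1)<p$). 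None of this tries to force a block decomposition of $f_{\mathbf n}$ itself.
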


 Since for instance $\Phi_5(z)$ and $\Phi_5(z)\Phi_6(z)$ have the same nonzero coefficients, $S_{\text c}$ can, however, contain more than one element for $k$ prime.

\begin{proof} Consider first the case of composite $k.$ \\
Suppose that $k=st,$ where integers $s $ and $t$ are greater than 1. Let $g(z)=\sum_{j=0}^{s-1}z^j$  and $h(x)=\sum_{j=0}^{t-1}z^j.$ If $m$ and $l$ are arbitrary integers greater than 1 and such that $\gcd(m,l)=\gcd(m,t!)=\gcd(l,s!)=1$ then it is not difficult to check that
$g(z^m)h(z^l)=f_{\mathbf{n}}(z)$ for some $\mathbf{n}\in S.$ Since $M(g(z^m))=M(h(z^l))=1$, we see that in fact $f_\mathbf{n}(z)\in S_{\text c}$, and so   $S_{\text c}$ is infinite.

Now suppose that $k=p$ is prime. \\
Let $\mathbf{n}\in S_{\text c}$ and consider $f_{\mathbf{n}}(z) =  z^{n_1} +\dots + z^{n_{k-1}} + 1.$ By Kronecker's Theorem, $f_{\mathbf{n}}$ is a product of cyclotomic polynomials. Further, $f(1)=p.$ However the value of a cyclotomic polynomial at 1 is $\Phi_m(1)=1$ if $m$ is divisible by two distinct primes or $\Phi_{m}(1)=q$ if $m$ is a power of a single prime $q.$  Thus, for some $n,$ $\Phi_{p^n}$ divides $f.$

We claim that $n=1.$ To show this, we use a theorem of Mann \cite{Mann} which, in our notation, takes the form of the following lemma.

\begin{Lem}\label{L-exp}
Let $f(z)=\sum_{i=1}^{k-1} a_iz^{n_i}+a_k\in \Z[z],$ with $(n_1,\dots,n_{k-1})\in S$ and where the coefficients $a_i,$ $1\leq i\leq k$  nonzero. If a cyclotomic polynomial $\Phi$ divides $f$ but $\Phi$ does not divide any proper subsum of $\sum_{i=1}^{k-1} a_iz^{n_i}+a_k$ then $\Phi=\Phi_q,$ where $q$ is squarefree and composed entirely of primes less than or equal  to $k.$
\end{Lem}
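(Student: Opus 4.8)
The plan is to prove Lemma~\ref{L-exp} by reducing it to the classical theorem of Mann on vanishing sums of roots of unity, applied to the specific root of unity $\zeta$ that is a zero of the dividing cyclotomic polynomial $\Phi$. First I would let $\Phi = \Phi_q$ for some $q\ge 1$, pick a primitive $q$-th root of unity $\zeta$, and write out the hypothesis $f(\zeta)=0$ as the relation $\sum_{i=1}^{k-1} a_i\zeta^{n_i} + a_k = 0$, a vanishing sum of $k$ nonzero terms, each of which is an algebraic number of the form $a_i\zeta^{n_i}$ (and $a_k\zeta^0$). Since the exponents $n_1,\dots,n_{k-1}$ together with $0$ are the given data and $\gcd(n_1,\dots,n_{k-1})=1$, the $k$ quantities $\zeta^{n_1},\dots,\zeta^{n_{k-1}},1$ are distinct powers of $\zeta$ unless two of them coincide $\bmod q$, and in any case this is a relation of the type Mann's theorem governs.

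The next step is to invoke the minimality hypothesis: $\Phi$ divides no proper nonempty subsum $\sum_{i\in I} a_i z^{n_i} + \varepsilon a_k$ (with $\varepsilon\in\{0,1\}$ depending on whether the constant term is included), which translates exactly into the statement that the vanishing sum $\sum_i a_i\zeta^{n_i} + a_k = 0$ has \emph{no} vanishing proper subsum, i.e.\ it is an \emph{irreducible} (or ``primitive'') vanishing sum of $k$ roots of unity up to the nonzero rational scalars $a_i$. Here I would be slightly careful: Mann's theorem in its standard form is about sums $\sum \varepsilon_i\zeta_i = 0$ with $\varepsilon_i$ rationals (or even just requiring the $\zeta_i$ to be roots of unity and the relation minimal); one clears denominators and absorbs the integers $a_i$ to see that the set of roots of unity $\{\zeta^{n_i}\}\cup\{1\}$, normalised so that one term is $1$, generates a vanishing sum with no vanishing subsum. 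Mann's conclusion is then that each ratio $\zeta^{n_i}/\zeta^{n_j}$ (and $\zeta^{n_i}/1 = \zeta^{n_i}$) is a root of unity of order dividing the product of primes $\le k$; equivalently $\zeta^{n_i}$ has order dividing $\prod_{p\le k} p$ for each $i$, and since $\gcd(n_1,\dots,n_{k-1})=1$ this forces the order of $\zeta$ itself — that is, $q$ — to divide $\prod_{p\le k}p$, hence $q$ is squarefree and composed of primes $\le k$. Finally $\Phi=\Phi_q$ with $q$ of the asserted form.

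The main obstacle I anticipate is the bookkeeping around the constant term $a_k$ and the normalisation: Mann's theorem is cleanest when stated for sums of roots of unity all normalised so that one summand equals $1$, whereas here the summands are $a_i\zeta^{n_i}$ with integer (not unit) coefficients. The resolution is standard — Mann's theorem (and the Conway–Jones refinement) in fact applies to vanishing sums $\sum_{i} b_i\zeta_i=0$ with $b_i$ arbitrary nonzero rationals, provided no proper subsum vanishes, yielding that all $\zeta_i/\zeta_j$ have order dividing the primorial of $k$; one must only check that the hypothesis ``$\Phi$ does not divide any proper subsum of $f$'' is precisely the ``no vanishing proper subsum'' hypothesis, accounting for the fact that a subsum may or may not include the constant term $a_k$. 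A second, minor, point to verify is that the exponents being coprime ($\gcd(n_1,\dots,n_{k-1})=1$) is genuinely needed to pass from ``$\zeta^{n_i}$ has small order for all $i$'' to ``$\zeta$ has small order'', which follows since the order of $\zeta$ divides $\mathrm{lcm}$ of the orders of the $\zeta^{n_i}$ combined with the coprimality of the exponents. With these two checks in place the lemma follows directly from Mann's theorem; no further computation is required.
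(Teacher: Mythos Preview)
Your proposal is correct and matches the paper's approach exactly: the paper does not give a separate proof of this lemma but simply introduces it as ``a theorem of Mann \cite{Mann} which, in our notation, takes the form of the following lemma.'' You have supplied precisely the translation the paper leaves implicit --- evaluating at a primitive $q$-th root of unity $\zeta$, identifying the ``$\Phi$ divides no proper subsum'' hypothesis with Mann's irreducibility condition, and using $\gcd(n_1,\dots,n_{k-1})=1$ together with the presence of the constant term $a_k\zeta^0$ to deduce that the order $q$ of $\zeta$ itself (not just of the $\zeta^{n_i}$) divides $\prod_{p\le k}p$.
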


In the case of $f_{\mathbf{n}}$, a proper subsum defines a polynomial $g$ such that $g(1)$ counts its number of monomials. Hence $g(1)<p,$ and consequently $g$ cannot be divisible by $\Phi_{p^n}$. By Lemma \ref{L-exp}, $p^n$ is squarefree, so $n=1,$ as claimed.

Next, we need the following result.

\begin{Thm} [{{Dobrowolski \cite[Theorem 2 and Corollary 1]{D2}}}] \label{T-D} Let $f(z)=\sum_{i=1}^k a_iz^{n_i} \in\Z[z]$, $f(0)\ne 0$,
 be a polynomial with $k$ nonzero coefficients. There are positive constants $c_1$ and $c_2$, depending only on $k$,
 and polynomials $f_0,f_2\in\Z[z]$ such that if
\[
\deg f_\text{c} \ge \left(1-\frac{1}{c_1}\right)\deg f
\]
then either
\beq\label{D-1}
f(z)=f_0(z^l), \qquad \text{where } \deg f_0\le c_2,
\enq
or
\beq\label{D-2}
f(z)=\left(\prod_i \Phi_{q_i}(z^{l_i})\right)f_2(z), \text{ where } \min_i\{l_i\}\ge\max\left\{\frac{1}{2c_1}\deg f,\deg f_2\right\}.
\enq
Furthermore in this second case then $f_2(z)=\pm \sum_{i=j}^k a_iz^{n_i}$ for some $j$ with $1< j\le k$.
\end{Thm}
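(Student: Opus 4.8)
The plan is to extract structural information from the many cyclotomic factors of $f$ by applying the theory of vanishing sums of roots of unity, in the form of Mann's theorem (Lemma \ref{L-exp}), to each cyclotomic zero of $f$ one by one. Write $f_\text{c}=\prod_m \Phi_m^{e_m}$ for the cyclotomic part of $f$ and $f=f_\text{c}\,g$ with $g$ the noncyclotomic cofactor; the hypothesis is exactly that $\deg g\le \frac1{c_1}\deg f$. Fix once and for all the squarefree integer $Q=\prod_{p\le k}p$. For each primitive $m$-th root of unity $\zeta$ with $f(\zeta)=0$, I would group the monomials into minimal vanishing subsums: a partition $\mathcal P_\zeta$ of $\{1,\dots,k\}$ into blocks $B$ on each of which $\sum_{i\in B}a_i\zeta^{n_i}=0$ with no proper subsum vanishing.

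The first key step is a divisibility mechanism. By Mann's theorem, within each minimal block $B$ the ratios $\zeta^{n_i-n_{i'}}$ ($i,i'\in B$) are roots of unity of order dividing $Q$. Since the order of $\zeta^{d}$ is $m/\gcd(m,d)$, this forces the ``$Q$-free part'' $m^\ast:=\prod_{p\le k}p^{\max(e_p-1,0)}\prod_{p>k}p^{e_p}$ of $m$ (writing $m=\prod p^{e_p}$) to divide every within-block gap $n_i-n_{i'}$. Thus a cyclotomic zero of large order compels the high part of that order to divide the exponent gaps inside each of its blocks, up to a bounded, $Q$-supported correction.

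Next I would run a pigeonhole argument. There are at most $B_k$ (the $k$-th Bell number) partitions of $\{1,\dots,k\}$, so one partition type $\mathcal P$ must account for cyclotomic zeros of total degree at least $\deg f_\text{c}/B_k\ge (1-\tfrac1{c_1})\deg f/B_k$; choosing $c_1$ large in terms of $k$ keeps this $\gg\deg f$. Fix this dominant $\mathcal P$, with blocks $B_1,\dots,B_r$. If $\mathcal P$ is trivial ($r=1$), the mechanism gives a large common divisor $l$ of all gaps $n_i-n_j$; since $f(0)\ne 0$ one exponent is $0$, so $l\mid n_i$ for all $i$ and $f(z)=f_0(z^l)$, with $\deg f_0=\deg f/l\le c_2$ because $l$ is forced to be $\gg\deg f$. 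This is conclusion \eqref{D-1}. If $\mathcal P$ is nontrivial ($r\ge2$), each block carries its own large common divisor $l_s$ of its within-block gaps, and the trailing (low-degree) terms assemble into the cofactor; un-substituting produces the product $\prod_i\Phi_{q_i}(z^{l_i})$ of dilated cyclotomic polynomials, with the leftover $f_2$ being a trailing subsum $\pm\sum_{i=j}^k a_iz^{n_i}$, which is conclusion \eqref{D-2}. The bounds $\min_i l_i\ge\max\{\tfrac1{2c_1}\deg f,\deg f_2\}$ would follow by tracking how much cyclotomic degree the dominant partition carries.

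The main obstacle is the nontrivial-partition case: converting the per-block, per-$\zeta$ divisibility data into a genuine polynomial factorisation $f=\big(\prod_i\Phi_{q_i}(z^{l_i})\big)f_2$ in which $f_2$ is exactly a contiguous trailing subsum. Because the blocks partition the $k$ terms for each $\zeta$ but may sit at different orders $m$, one must control how cyclotomic factors of different orders overlap, discard the lowest-order ($\le Q$) contributions that do not enlarge any $l_i$, and show that the genuinely large $l_i$ organise the top-degree terms into the dilated-cyclotomic product while forcing the remaining terms to be the tail. Making all constants depend on $k$ alone, and pushing the quantitative totient bookkeeping through the pigeonhole, is where the delicate work lies; an induction on $k$ applied to the smaller-term block subsums $\sum_{i\in B_s}a_iz^{n_i}$ (each divisible by the same large cyclotomic product, hence again ``mostly cyclotomic'') is the natural device to close this step.
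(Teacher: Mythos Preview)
The paper does not prove this theorem at all: it is quoted verbatim as a result of Dobrowolski, with the attribution ``\cite[Theorem~2 and Corollary~1]{D2}'', and is then applied as a black box inside the proof of Proposition~\ref{P-exp}. So there is no proof in the present paper to compare your proposal against.

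As for your outline on its own merits: the ingredients you list---Mann's theorem applied to each cyclotomic zero, grouping the $k$ monomials into minimal vanishing subsums, the observation that the $Q$-free part of the order must divide all within-block gaps, and pigeonholing on the finitely many partition types---are indeed the right ones, and they are the backbone of Dobrowolski's argument in \cite{D2}. The trivial-partition case leading to \eqref{D-1} is essentially complete as you wrote it.

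The genuine gap is the one you yourself flag. In the nontrivial-partition case you have, for a large set of cyclotomic zeros sharing the dominant partition $\mathcal P$, divisibility of the within-block gaps by large integers. But this does \emph{not} yet give a polynomial identity $f=\bigl(\prod_i\Phi_{q_i}(z^{l_i})\bigr)f_2$. Two specific obstacles: first, the pigeonhole isolates only a subset of the cyclotomic zeros; the remaining $\Phi_m\mid f$ with other partition types still divide $f$, and you have not said where they go in the claimed factorisation. Second, and more seriously, nothing in the argument so far forces the blocks to line up as ``top-degree terms'' versus a contiguous ``trailing tail'' $\pm\sum_{i=j}^k a_iz^{n_i}$; a priori the blocks of $\mathcal P$ could interleave arbitrarily among the exponents $n_1>\cdots>n_k$. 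Producing this contiguity, and simultaneously arranging that the top block(s) combine into a product of dilated cyclotomics with the stated lower bound on $\min_i l_i$, is the substantive content of \cite{D2}; your suggestion of inducting on $k$ via the block subsums is in the right spirit, but as written the proposal stops well short of a proof.
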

In this theorem $\Phi_q$ is the $q$-th cyclotomic polynomial, while $f_\text{c}$ is the product of all cyclotomic polynomials dividing $f$.

 Now we apply Theorem \ref{T-D}. If equation \eqref{D-1} of its conclusion occurs then, with our restriction on $S,$  $\deg f \leq c_2.$ If equation \eqref{D-2}  occurs, then either $\deg f \leq 2c_1$ or $\min\{l_i\}\geq 2.$ In the latter case $\Phi_p$ must divide $f_2,$  where $f_2$ is a proper subsum of $f.$  Hence $f_2(1)<p$, contradicting $\Phi_p \mid f_2.$ Thus in all admissible cases the degree of $f$ is bounded by a constant depending only on $k.$ Therefore $S_{\text c}$ is finite.
\end{proof}

\end{document}